\newcommand{\levy}{L\'{e}vy }
\newcommand{\p}{{\mathbb P}}
\newcommand{\e}{{\mathbb E}}
\newcommand{\D}{{\mathrm d}}
\newcommand{\R}{{\mathbb R}}
\newcommand{\1}[1]{\mbox{\rm 1}_{\{#1\}}}
\renewcommand{\a}{{\alpha}}
\newcommand{\Cpos}{\mathbb C^{\Re> 0}}
\newtheorem{theorem}{Theorem}
\newtheorem{lemma}{Lemma}
\newtheorem{prop}{Proposition}
\newtheorem{remark}{Remark}
\begin{document}
\bibliographystyle{imsart-nameyear}

\begin{frontmatter}

\title{Two coupled \levy queues with independent input}
\runtitle{Two coupled \levy queues}


\author{\fnms{Onno} \snm{Boxma}\ead[label=e1]{O.J.Boxma@tue.nl}}
\address{Department of Mathematics and Computer Science, Eindhoven University of Technology,\\5600 MB Eindhoven, The Netherlands\\ \printead{e1}}
\and
\author{\fnms{Jevgenijs} \snm{Ivanovs}\corref{}\ead[label=e2]{Jevgenijs.Ivanovs@unil.ch}\thanksref{t1}}
\thankstext{t1}{Supported by the Swiss National Science Foundation Project 200020-143889.}
\address{Department of Actuarial Science, University of Lausanne,\\CH-1015 Lausanne, Switzerland\\ \printead{e2}}


\begin{abstract}
We consider a pair of coupled queues driven by independent spectrally-positive \levy processes. 
With respect to the bi-variate workload process this framework includes both the coupled processor model and
the two-server fluid network with independent \levy inputs.
We identify the joint transform of the stationary workload distribution in terms of Wiener-Hopf factors corresponding to two auxiliary \levy processes
with explicit Laplace exponents. We reinterpret and extend the ideas of \cite{cohen_boxma} to provide 
a general and uniform result with a neat transform expression.
\end{abstract}

\begin{keyword} \kwd{coupled processor model}\kwd{fluid network} \kwd{\levy input}\kwd{Wiener-Hopf factorization} 
\end{keyword}



\end{frontmatter}

\bibliographystyle{acmtrans-ims}

\section{Introduction}
In the queueing literature, several studies have been devoted to a queueing model of two servers,
each with their own customer arrival process, with the special feature that the speed of one server
changes when the other server becomes idle. This has become known as the coupled processor model.
A possibly even more popular model of two servers is a fluid network with independent arrival processes, 
where fixed fractions of fluid exiting one queue are routed into the same and the other queue, as well as out of the system.
These models are intimately related and in the case of \levy input both can be put in our framework below.

More specifically, we assume that our queues are driven by two independent \levy processes $X_1(t)$ and $X_2(t)$ without negative jumps.
We model a pair of workload processes $(W_1(t),W_2(t))$ as a 2-dimensional reflected process, see e.g.~\cite{harrison_reiman_reflection2,kella_reflecting},
\begin{align}\label{eq:model}
W_1(t)=W_1(0)+X_1(t) -r_1 L_2(t)+L_1(t), \\
W_2(t)=W_2(0)+X_2(t) -r_2 L_1(t)+L_2(t),\nonumber
\end{align}
where $W_i(t)$ are nonnegative, $L_i(t)$ are nonnegative and nondecreasing with $L_i(0)=0$, and, in addition, it is required that if $t$ is a point of increase of $L_i(t)$
then $W_i(t)=0$. Sometimes the latter condition is replaced by an equivalent integral condition or minimality requirement. 
We assume that $r_1,r_2\geq 0$ and $r_1r_2<1$, in which case workload processes $(W_1(t),W_2(t))$ (with given initial values) and unused capacity processes $(L_1(t),L_2(t))$ satisfying the above conditions exist and are unique,
see~\cite[Sec.~5]{kella_reflecting}.

\subsection{Interpretations}
It is the easiest to understand the model given by~\eqref{eq:model} in the case of compound Poisson inputs and constant service rates \mbox{$c_i>0$}, i.e.\ when each $X_i(t)$ is a compound Poisson process (CPP) minus $c_it$.
Note that when $W_i(t)$ hits zero it stays at zero until the arrival of the next customer, which leads to the following four cases.
While \mbox{$W_1(t),W_2(t)>0$} these workload processes evolve according to $X_1(t)$ and $X_2(t)$. While $W_1(t)=0$ and $W_2(t)>0$ the process $L_1(t)$ evolves as $c_1t$
resulting in an additional service rate $r_2c_1$ in the second queue, i.e.\ the fraction $r_2$ of the first server capacity is used to help the second. Similarly, while $W_1(t)>0,W_2(t)=0$ the service rate in the first queue
is $c_1+r_1c_2$.
Finally, when both queues are empty, the processes $L_i(t)$ evolve as certain linear drifts canceling the negative drifts of $X_i(t)$ and each other's influence, which is possible if and only if $r_1r_2<1$. 
It is noted that compound Poisson input allows for a formulation of the coupled processor model, which goes beyond our assumption of $r_1r_2<1$.
One simply replaces~\eqref{eq:model} by an explicit description of the workload processes in the above four cases, see~\cite{cohen_boxma}.

As mentioned above, our model includes two-dimensional fluid networks with independent \levy input, where the column vector of workloads is a reflected process of the form:
\begin{align*}
W(t)=W(0)+\tilde X(t)-(I-P')ct+(I-P')\tilde L(t),
\end{align*}
see e.g.~\cite{kella_fluid_networks_levy}.
Here $\tilde X(t)$ is a column vector of external non-decreasing input processes into each queue, $P$ is a routing matrix (a substochastic matrix) with $P^n\rightarrow 0$ for $n\rightarrow \infty$, 
$P'$ is its transpose, $I$ is the identity matrix, and $c$ is a column vector of (maximal) service rates. One usually interprets $ct-\tilde L(t)$ as a vector of cumulative outflows from the queues, which are routed according to~$P$.
We can write
\[W_i(t)= W_i(0) + X_i(t)+(1-p_{ii})\tilde L_i(t)-p_{ji}\tilde L_j(t),\]
where $X_i(t)=\tilde X_i(t)-c_i t+p_{ii}c_i t+p_{ji}c_j t$ and $(i,j)$ is $(1,2)$ or $(2,1)$.
Letting $L_i(t)=(1-p_{ii})\tilde L_i(t)$ and $r_i=\frac{p_{ji}}{1-p_{jj}}$ we obtain~\eqref{eq:model} and guarantee the above conditions. 
We remark that commonly $\tilde X_i(t)$ is a subordinator (a non-decreasing \levy process) and hence $X_i(t)$ is a \levy process without negative jumps having bounded variation (on finite intervals). 
We allow $X_i(t)$ to be general \levy processes without negative jumps, which may lead to a certain debate about an appropriate model for the fluid network,
because cumulative outflows (if defined at all) are not necessarily non-decreasing in this general setup. Nevertheless, such models have appeared in the literature, see~\cite{kella_whitt_stability}. 
Finally, one can go the other way around and produce a network from the model~\eqref{eq:model}, which is immediate if $r_1,r_2\leq 1$. 
If $r_1>1$ (and similarly for $r_2>1$) then consider $(W_1(t),r_1 W_2(t))$ and note that it corresponds to  
a pair of workload processes in a network with routing matrix given by $p_{11}=p_{22}=0,p_{12}=r_1r_2,p_{21}=1$ and driving processes $X_1(t),r_1 X_2(t)$,
see also~\cite[Lem.~4.1]{kella_fluid_networks_levy}.

\subsection{Stationary distribution}
Let us note that $(I-P')^{-1}\e X(1)<0$, where $X(t)$ is a multidimensional driving process, is a sufficient condition 
for the existence of a stationary distribution in a general \levy network, which 
follows from~\cite{kella_whitt_stability}. Furthermore, if none of $X_i(t)$ is a zero process then this condition is also necessary.
Stronger limiting results are available in~\cite{kella_fluid_networks_levy} for the case when both $X_i(t)$ have bounded variation. 
Stability of~\eqref{eq:model} can be easily related to the stability of the corresponding network yielding the following condition
\begin{align}\label{eq:stability}
 &d_1+r_1d_2>0, & d_2+r_2d_1>0,
\end{align}
where $d_i=-\e X_i(1)$. Assuming that~\eqref{eq:stability} holds we let a pair of random variables $(W_1,W_2)$ refer to the joint stationary distribution of $(W_1(t),W_2(t))$ 
(uniqueness of this distribution will follow from the uniqueness of its transform).
Our main result is an expression for the Laplace-Stieltjes transform $\e e^{-\alpha_1 W_1 - \alpha_2 W_2}$
in terms of Wiener-Hopf factors corresponding to two auxiliary processes
with explicit Laplace exponents, see Theorem~\ref{thm:main}. We reinterpret and extend the ideas from Chapter III.3
of~\cite{cohen_boxma} to provide 
a general and uniform result. Its derivation is rather compact, and is based on a number of identities and observations from the fluctuation theory for \levy processes.

Let us shortly discuss a special case, when $X_1(t)$ is a subordinator, i.e.\ a non-decreasing \levy process. 
In this case $L_1(t)$ can increase only when $L_2(t)$ increases, hence both queues should be empty.
This feature allows for a rather simple analysis of the joint transform similarly to~\cite{kella_whitt_tandem}.
So we can assume in the following that each $X_i(t)$ is a spectrally-positive \levy process, i.e. it is a \levy process which is not a subordinator, and which can have only positive jumps.

\subsection{Related literature and motivation}
The main application/motivation of the coupled processor model is provided by the fact that, in a network of work stations, a user may use
other machines than its own when those machines are idle;
this is often referred to as {\em cycle stealing}.
Another application occurs in integrated-service communication networks.
Differentiated quality-of-service among different traffic flows is achieved in such networks
via scheduling algorithms such as Weighted Fair Queueing.
Mathematically, such scheduling algorithms may often be represented by a form of {\em Generalized Processor Sharing},
where traffic flow $i$ gets a weight factor $w_i \in (0,1)$, with $\sum w_i =1$.
If all traffic flows are backlogged, then flow $i$ is served at rate $w_i$.
If some of the flows are not backlogged, then the excess capacity is redistributed
among the backlogged flows proportionally to their weights.
Again, this may be viewed as a form of cycle stealing.
A pioneering paper on the mathematical analysis of coupled processors is~\cite{FI}.
They consider two $M/M/1$ queues with service speeds $c_1$ and~$c_2$, respectively, unless the other queue is empty;
then the speeds are $c_1^*$ and~$c_2^*$, respectively.
They study the two-dimensional queue length process, and show how the generating function of the joint steady-state
queue length distribution can be obtained via the solution of a Riemann-Hilbert boundary value problem.
\cite{KMM} provide an elegant solution of the special, slightly easier, case of two symmetric $M/M/1$ queues in which a server doubles its speed
when the other server is idle (one might say that the idle server helps the other one).
\cite{cohen_boxma} have generalized the model of \cite{FI} to the case of general service time distributions.
They consider the two-dimensional workload process. See~\cite{Cohen92} for a further extension
to the case that arrivals may also simultaneously occur at both queues.

The analytic approach of \cite{FI,cohen_boxma}, exploiting a relation to boundary value problems in complex function theory,
seems to be limited to two dimensions. This has led to work in the following directions.\hfill\\
(i) Application of a numerical-analytic method, the Power Series Algorithm, gives numerical results
for more than two coupled processors \cite{Blanc,HKR}.\hfill\\
(ii) \cite{OHS} have developed an approximation method which yields  mean response times; the approximation can be made as accurate
as desired.\hfill\\
(iii) Several studies (see, e.g., \cite {BBJ2,BBJ1}) consider tail asymptotics of workloads for coupled processors
and multi-queue systems with some form of Generalized Processor Sharing.

The body of literature concerning fluid networks with \levy input is huge. 
The joint transform of the stationary workload in such networks is not known apart from a few special cases.
The transform can be obtained for tandem and feed-forward networks with decreasing service rates (in the direction of flow), see e.g.~\cite{kella_whitt_tandem}.
In such networks, if one queue is empty then all the queues preceding it are empty as well. This is the main feature facilitating the computation,
which can be also guaranteed in some other models, see~\cite{badila_boxma}.
Otherwise, the only tractable examples concern networks of two queues, which are closely related to the coupled processor model discussed above. 
The main result of the present paper yields an exact expression for the
transform in a two-node network with independent \levy input. This model generalizes, e.g.,~a tandem queue of~\cite{MR}, for which the authors study tail asymptotics.

\subsection{Organization of the paper}
Section~\ref{sec:facts} summarizes basic facts about spectrally-positive \levy processes and about Wiener-Hopf factorization
of \levy processes.
In Section~\ref{sec:funda} we relate a spectrally-positive \levy process to a certain pure-jump subordinator
which plays a fundamental role in our main result, Theorem~\ref{thm:main}, formulated in Section~\ref{sec:main}.
Section~\ref{sec:proof} contains the proof of Theorem~\ref{thm:main}.
It basically consists of three steps. In Subsection~\ref{sec:proof1} we derive a functional equation
for the joint workload transform; in Subsection~\ref{sec:proof2} the kernel of that functional equation is studied,
and the functional equation is solved via Wiener-Hopf factorization assuming certain bounds;
these bounds are established in Subsection~\ref{sec:proof3}.
Some special cases are considered in Section~\ref{sec:special}, where we also discuss the result of~\cite{cohen_boxma} in the case of CPP inputs.

\section{Basic facts}\label{sec:facts}
For ease of reference let us recall the L\'evy-Khintchine formula for a spectrally-positive \levy process $X(t)$
(cf.~\cite{kypr}):
\begin{equation}\label{eq:phi}
 \phi(\a)=\log\e e^{-\a X(1)}=a\a+\frac{1}{2}\sigma^2\a^2-\int_0^\infty(1-e^{-\a x}-\a x\1{x<1})\nu(\D x),
\end{equation}
where $\nu(\D x)$ is a
\levy measure on $(0,\infty)$ satisfying $\int_0^\infty(1\wedge
x^2)\nu(\D x)<\infty$. 
The process $X(t)$ has bounded variation (on finite intervals) if and only if $\sigma=0$ and $\int_0^1x\nu(\D x)<\infty$, in which case we have an alternative representation
\begin{equation}\label{eq:phi_bounded}
 \phi(\a)=\mu\a-\int_0^\infty(1-e^{-\a x})\nu(\D x)
\end{equation}
and $\mu$ can be interpreted as a linear drift. The case of $\mu=0$ corresponds to a pure-jump subordinator.
This subordinator is either a compound Poisson process (CPP) or an infinite activity subordinator according to $\nu(0,\infty)$ being finite or infinite.

Differentiating under the integral sign in~\eqref{eq:phi}, which can be justified, we get
\[\phi'(\a)=a+\sigma^2\a+\int_0^1x(1-e^{-\a x})\nu(\D x)-\int_1^\infty xe^{-\a x}\nu(\D x)\]
for $\a>0$. This shows that $X(t)$ has bounded variation if and only if $\lim_{\a\rightarrow\infty}\phi'(\a)$ is finite.

Finally, let us recall the celebrated Wiener-Hopf factorization for a general (two-sided) \levy process $X(t)$ and some positive constant $p>0$, see also~\cite[Thm.\ 6.16]{kypr} (and comments on p.~167 about the CPP case).
Consider the Laplace transforms
\begin{align}
 &\Psi^+(\a)=\e e^{-\a \overline X(e_p)}, ~~ \Re(\a)\geq 0, &\Psi^-(\a)=\e e^{-\a \underline X(e_p)}, ~~ \Re(\a)\leq 0,
\end{align}
where $e_p$ is an independent exponentially distributed r.v.\ with rate $p$ and $\overline X(t),\underline X(t)$ denote supremum and infimum processes respectively.
Note that $\Psi^\pm(\a)$ are analytic in the corresponding half-planes and continuous on the imaginary axis.
They satisfy the following factorization for $w\in i\R$:
\begin{equation}\label{eq:WH}
 \frac{p}{p-\phi(w)}=\Psi^+(w)\Psi^-(w).
\end{equation}
Let us finally note that identification of the Wiener-Hopf factors is a difficult but well-studied problem with some numerical evaluation techniques available, see e.g.~\cite{mandjes_gruntjes}.

\section{Fundamental subordinators}
\label{sec:funda}
Consider a spectrally-positive \levy process $X(t)$, which will serve as a driving process in our model.
The goal of this section is to associate to $X(t)$ a certain pure-jump subordinator $Y(t)$, which will play a fundamental role in our main result. 
Recall that $\phi(\a)$ denotes the Laplace exponent of $X(t)$, and $d=-\e X(1)=\phi'(0)\in(-\infty,\infty)$.
Note that we have excluded only $d=-\infty$, which is allowed because of the stability condition~\eqref{eq:stability}.

Consider the first passage (downwards) process $\tau_x^-,x\geq 0$, where $\tau_x^-=\inf\{t\geq 0:X(t)<-x\}$, 
which is a (possibly killed) \levy subordinator with the Laplace exponent $-\Phi(\a)$ defined via
 \begin{equation}\label{eq:Phi}
  \e e^{-\a\tau_x^-}=e^{-\Phi(\a)x}
 \end{equation}
for all $\a$ with $\Re(\a)\geq 0$. 
For real positive $\a$ the function $\Phi(\a)$ is positive and is uniquely identified by $\phi(\Phi(\a))=\a$. 
Moreover, $\lim_{\a\rightarrow\infty}\Phi(\a)=\infty$, and also $\Phi(0)=0$ if and only if $d\geq 0$.

It is known that $-\a/\Phi(\a)$ is the Laplace exponent of a certain killed subordinator (ascending ladder time process, see e.g.~\cite[p.~170]{kypr}). 
Note that if $d\geq 0$ then $\lim_{\a\downarrow 0}\frac{-\a}{\Phi(\a)}=\frac{-1}{\Phi'(0)}=-\phi'(0)=-d$. If, however, $d<0$ then this limit is 0.
Hence $d^+=d\vee 0$ is the rate of killing, which we remove to obtain a non-killed subordinator $Y(t)$ with the Laplace exponent
\begin{equation}\label{eq:phi_y}
\phi^Y(\a)=d^+-\frac{\a}{\Phi(\a)}.
\end{equation}
This is a pure-jump subordinator, which follows from $\lim_{\a\rightarrow \infty}\phi^Y(\a)/\a=0$ and representation~\eqref{eq:phi_bounded}.
Note also that~\eqref{eq:phi_y} holds for all $\a\neq 0$ with $\Re(\a)\geq 0$ by analyticity and continuity of Laplace exponents, see Section~\ref{sec:proof2}. 

Let us consider the dichotomy of bounded and unbounded variation for the process $X(t)$:
\begin{enumerate}
 \item $X(t)$ is of bounded variation: $Y(t)$ is a CPP,
\item $X(t)$ is of unbounded variation: $Y(t)$ is an infinite activity subordinator.
\end{enumerate}
This can be seen by considering $\lim_{\a\rightarrow\infty}\phi^Y(\a)=d^+-\lim_{\a\rightarrow\infty}\frac{1}{\Phi'(\a)}=d^+-\lim_{\a\rightarrow\infty}\phi'(\a)$,
which is finite in the first case and is $-\infty$ in the second as was discussed in Section~\ref{sec:facts}.
So in the first case we have $\p(Y(1)=0)>0$ and in the second $\p(Y(1)=0)=0$, which correspond to a CPP and an infinite activity subordinator respectively.

\section{Transform of the stationary workload}
\label{sec:main}
Consider the model specified by~\eqref{eq:model}, where $r_i\geq 0$ and $r_1r_2<1$. Recall that $X_1(t)$ and $X_2(t)$ are two independent spectrally-positive \levy processes with 
Laplace exponents $\phi_i(\a)$, $d_i=-\e X_i(1)\in(-\infty,\infty)$, and assume that stability condition~\eqref{eq:stability} holds.
Let $Y_i(t)$ be a pure-jump subordinator associated to $X_i(t)$, whose Laplace exponent $\phi^Y_i(\a)$ is given in~\eqref{eq:phi_y}.
Define two \levy processes and two positive constants:
\begin{align}\label{eq:X_LR}
&X_L(t)=Y_1(r_2t)-Y_2(t), &X_R(t)=Y_1(t)-Y_2(r_1t),\\
&p_L=d^+_2+r_2d^+_1, &p_R=d^+_1+r_1d^+_2.\nonumber
\end{align}
Their corresponding Laplace exponents for $w \in i\R, w\neq 0$ are given by
\begin{align}\label{eq:phi_phi}
 &\phi_L(w)=p_R-r_2\frac{w}{\Phi_1(w)}+\frac{w}{\Phi_2(-w)}, &\phi_R(w)=p_L-\frac{w}{\Phi_1(w)}+r_1\frac{w}{\Phi_2(-w)}.
\end{align}
Finally, we let $\Psi_L^\pm(\a)$ be the Wiener-Hopf factors corresponding to $X_L(t)$ and rate parameter $p_L$. 
Similarly, $\Psi_R^\pm(\a)$ are the Wiener-Hopf factors corresponding to $X_R(t)$ and $p_R$. 

\begin{theorem}\label{thm:main}
The joint transform of the stationary workloads is given by
\begin{align}\label{eq:thm}
&\e e^{-\a_1 W_1-\a_2 W_2}=\frac{1}{(1-r_1r_2)(\phi_1(\a_1)+\phi_2(\a_2))}\\
&\times\left(p_R^0(\a_1-r_2\a_2)\frac{\Psi_L^-(-\phi_2(\a_2))}{\Psi_R^-(-\phi_2(\a_2))}+p_L^0(\a_2-r_1\a_1)\frac{\Psi_R^+(\phi_1(\a_1))}{\Psi_L^+(\phi_1(\a_1))}\right),\nonumber
\end{align}
where $\a_1> \Phi_1(0),\a_2> \Phi_2(0)$, and
\begin{align}\label{eq:p0}
 &p_L^0=d_2+d_1^+r_2+d_1^-/r_1, &p_R^0=d_1+d_2^+r_1+d_2^-/r_2.
\end{align}
\end{theorem}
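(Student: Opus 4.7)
My plan is to follow the three-step programme sketched in the introduction to Section~\ref{sec:proof}. The first step is to derive a functional equation for $\Pi(\a_1,\a_2)=\e e^{-\a_1 W_1-\a_2 W_2}$. Writing the 2D reflection as $Z(t)=Z(0)+\tilde X(t)+M\tilde L(t)$ with $\tilde X=(X_1,X_2)$ and $M=\bigl(\begin{smallmatrix}1&-r_1\\-r_2&1\end{smallmatrix}\bigr)$, I would apply the bivariate Kella--Whitt martingale to $e^{-\a_1 W_1-\a_2 W_2}$. The joint Laplace exponent of $\tilde X$ factorises as $\phi_1(\a_1)+\phi_2(\a_2)$ by independence, and stationarity reduces the martingale identity to
\begin{equation*}
(\phi_1(\a_1)+\phi_2(\a_2))\,\Pi(\a_1,\a_2)=(\a_1-r_2\a_2)K_1(\a_2)+(\a_2-r_1\a_1)K_2(\a_1),
\end{equation*}
where $K_i(\a_j):=\e\int_0^1 e^{-\a_j W_j(s)}\,dL_i(s)$, having used that $W_i=0$ on $\mathrm{supp}(dL_i)$.

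The second step analyses the kernel $\phi_1(\a_1)+\phi_2(\a_2)=0$ by parametrising $\a_1=\Phi_1(s)$, $\a_2=\Phi_2(-s)$ with $s\in i\R$. Setting $f(s)=K_1(\Phi_2(-s))$, analytic in $\Re(s)\leq 0$, and $g(s)=K_2(\Phi_1(s))$, analytic in $\Re(s)\geq 0$, the kernel equation becomes
\begin{equation*}
(\Phi_1(s)-r_2\Phi_2(-s))\,f(s)+(\Phi_2(-s)-r_1\Phi_1(s))\,g(s)=0.
\end{equation*}
The identity $1/\Phi_i(\a)=(d_i^+-\phi^Y_i(\a))/\a$ extracted from~\eqref{eq:phi_y} then rewrites the two coefficients, up to a common factor $\Phi_1(s)\Phi_2(-s)/s$, as $p_L-\phi_L(s)$ and $p_R-\phi_R(s)$, exactly the combinations in which the auxiliary Lévy processes $X_L,X_R$ enter. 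Invoking the Wiener--Hopf factorisations $p_L/(p_L-\phi_L)=\Psi_L^+\Psi_L^-$ and $p_R/(p_R-\phi_R)=\Psi_R^+\Psi_R^-$ rearranges the identity into
\begin{equation*}
\frac{p_L\,\Psi_R^-(s)}{\Psi_L^-(s)}\,f(s)=\frac{p_R\,\Psi_L^+(s)}{\Psi_R^+(s)}\,g(s),
\end{equation*}
whose left-hand side is analytic in $\Re(s)\leq 0$ and right-hand side in $\Re(s)\geq 0$, using that the WH factors have no zeros inside their natural half-planes.

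The last step is a Liouville-type argument. Provided both sides admit polynomial growth on $i\R$ and in their respective half-planes (the ``certain bounds'' supplied by Subsection~\ref{sec:proof3}), they glue into an entire function of bounded degree; since $f,g$ are bounded Laplace transforms of finite positive measures and the relevant WH-factor ratios can be controlled, the common function is a constant $C$, giving explicit Wiener--Hopf representations
\begin{equation*}
K_1(\a_2)=\tfrac{C}{p_L}\,\tfrac{\Psi_L^-(-\phi_2(\a_2))}{\Psi_R^-(-\phi_2(\a_2))},\qquad K_2(\a_1)=\tfrac{C}{p_R}\,\tfrac{\Psi_R^+(\phi_1(\a_1))}{\Psi_L^+(\phi_1(\a_1))}.
\end{equation*}
Substituting these back into the functional equation produces the formula of Theorem~\ref{thm:main}, and the value of $C$ (equivalently the constants $p_L^0, p_R^0$) is pinned down by the marginal identities $K_i(0)=\ell_i:=\e L_i(1)$ together with the mass-balance relation $\ell_i-r_i\ell_j=d_i$ obtained from stationarity of $W_i$. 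I expect the principal obstacle to be step three: first verifying the polynomial growth estimates that legitimise Liouville, and secondly, in the regime where some $d_i<0$ so that $\Phi_i(0)>0$, reconciling the two normalisations at the distinct points $\a_i=0$ and $\a_i=\Phi_i(0)$ in order to recover the explicit form of $p_L^0,p_R^0$ displayed in~\eqref{eq:p0}.
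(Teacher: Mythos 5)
Your outline reproduces the paper's argument step for step: the Kella--Whitt functional equation, the uniformization $\a_1=\Phi_1(w)$, $\a_2=\Phi_2(-w)$, the recognition of the kernel coefficients as $p_L-\phi_L$ and $p_R-\phi_R$, the Wiener--Hopf rearrangement into \eqref{eq:imaginary}, the Liouville step, and the evaluation of the constant at $w=0$ using $\e^* L_1(1)=(d_1+r_1d_2)/(1-r_1r_2)$ and its analogue. Your worry about the regime $d_i<0$ is resolved exactly at that last step: stability \eqref{eq:stability} forces at least one $d_i>0$, hence at least one $\Phi_i(0)=0$, and that side of \eqref{eq:imaginary} is used to read off $C$; the constants $p_L^0,p_R^0$ of \eqref{eq:p0} then come out of $C/p_L$ and $C/p_R$ by checking the three cases $d_1,d_2\ge 0$, $d_1<0$, $d_2<0$ separately.

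The genuine gap is the one you flag yourself: the growth estimates in step three are asserted, not proved, and they are the real content of the proof. Boundedness of $F_1,F_2$ is immediate from $\e^*L_i(1)<\infty$, but the ratio $\Psi_L^+(w)/\Psi_R^+(w)$ is \emph{not} bounded in general --- only when both inputs have bounded variation, in which case $X_L,X_R$ are compound Poisson and the Spitzer integral converges absolutely --- so plain Liouville is unavailable. What is actually needed is: (i) the exponential Spitzer formula \eqref{eq:Psi} to write the ratio as $e^{A(w)}$ up to a bounded factor; (ii) the pathwise domination $X_R(t)\le Y_1(t)$ together with the elementary estimate $\int_0^1 t^{-1}|1-e^{-zt}|\,\D t\le 2+2\ln(|z|\vee 1)$ of Lemma~\ref{lem:Gamma}, giving $|A(w)|\le C+4\ln|w|$ and hence a bound $C|w|^4$ on $\Re(w)\ge 0$ (the left half-plane is reduced to this by the index-reversal symmetry \eqref{eq:interchanged}); this only shows the entire function is a \emph{polynomial}; and (iii) a second, sharper estimate along the positive reals, $A(w)\le 1+\ln(|\phi_1^Y(w)|\vee 1)$ combined with $\phi_1^Y(w)/w\rightarrow 0$ from \eqref{eq:phi_y}, to conclude the ratio is $o(w)$ and therefore the polynomial is a constant. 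Without (iii) your argument would leave an undetermined linear term in \eqref{eq:imaginary}. None of these steps is long, but none is automatic, and the phrase ``the relevant WH-factor ratios can be controlled'' does not substitute for Proposition~\ref{prop:cond}.
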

It is noted that if $d_1,d_2\geq 0$ then $p_L^0=p_L$ and $p_R^0=p_R$. Moreover, if $r_i=0$ then $d_i>0$ according to~\eqref{eq:stability} implying $d_i^-=0$. In this case $0/0$ in the definition of $p^0_R,p_L^0$ is interpreted as $0$.

Consider the above systems of queues for $r_1=r_2=0$. Then $\overline X_R(t)=Y_1(t)$ and $\underline X_R(t)=0$ for all $t$, and $p_R=d_1$.
From the definition of the W-H factors we have $\Psi_R^-(\a)=1$ and $\Psi_R^+(\a)=\e e^{-\a Y_1(e_{d_1})}=\frac{d_1}{d_1-\phi^Y_1(\a)}$.
Plugging in $\a=\phi_1(\a_1)$ we obtain $\Psi_R^+(\phi_1(\alpha_1)) = \frac{d_1\a_1}{\phi_1(\a_1)}$, and similarly we obtain expressions for the other terms.
Putting things together we see that~\eqref{eq:thm} becomes
\[
\frac{1}{(\phi_1(\a_1)+\phi_2(\a_2))}
\left(d_1\a_1\frac{d_2\a_2}{\phi_2(\a_2)}+d_2\a_2\frac{d_1\a_1}{\phi_1(\a_1)}\right)=\frac{d_1\a_1}{\phi_1(\a_1)}\frac{d_2\a_2}{\phi_2(\a_2)},\]
which is indeed the transform of the workload in two independent queues.
Another verification of Theorem~\ref{thm:main} is given in Section~\ref{sec:detdrift}, where we assume that $X_2(t)=-d_2t$ for $d_2>0$.
For such a (degenerate) system we first provide a quick alternative derivation of the joint transform and then check it against Theorem~\ref{thm:main}.

\section{Proof}
\label{sec:proof}
In this section we prove Theorem~\ref{thm:main}.
The proof consists of three steps. In Subsection~\ref{sec:proof1} we derive a functional equation
for the joint workload transform; in Subsection~\ref{sec:proof2} the kernel of that functional equation is studied,
and the functional equation is solved via Wiener-Hopf factorization assuming certain bounds;
these bounds are established in Subsection~\ref{sec:proof3}.

\subsection{The functional equation}
\label{sec:proof1}
In this section we derive an equation for the two-dimensional joint workload transform, which involves two unknown functions. 
Identification of these functions is the main problem, which will be addressed in Subsections~\ref{sec:proof2} and
\ref{sec:proof3}.
The following result is based on a by now standard argument using the Kella-Whitt martingale, see~\cite{kella_whitt}.  
\begin{prop}
 It holds that 
\begin{align}\label{eq:main}&(\phi_1(\a_1)+\phi_2(\a_2))\e e^{-\a_1 W_1-\a_2 W_2}=(\a_1-r_2\a_2)F_1(\a_2)+(\a_2-r_1\a_1)F_2(\a_1),
\end{align}
where $\a_1,\a_2\geq 0$ and 
\begin{align}\label{eq:F}
 &F_1(\a)=\e^* \int_0^1 e^{-\a W_2(t)}\D L_1(t), &F_2(\a)=\e^* \int_0^1 e^{-\a W_1(t)}\D L_2(t)
\end{align}
and $\e^*$ is the expectation in stationarity, i.e.\ we assume that $(W_1(0),W_2(0))$ is distributed as $(W_1,W_2)$.
\end{prop}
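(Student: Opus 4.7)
The plan is to apply the Kella--Whitt martingale~\cite{kella_whitt} to the scalar projection $Z(t):=\a_1 W_1(t)+\a_2 W_2(t)$ for fixed $\a_1,\a_2\geq 0$. Multiplying the two lines of~\eqref{eq:model} by $\a_1$ and $\a_2$ respectively and adding, I would write
\begin{equation*}
Z(t)=Z(0)+\tilde X(t)+V(t),
\end{equation*}
where $\tilde X(t):=\a_1 X_1(t)+\a_2 X_2(t)$ is, by independence of $X_1$ and $X_2$, a scalar \levy process whose Laplace exponent at the point $1$ equals $\phi_1(\a_1)+\phi_2(\a_2)$, and $V(t):=(\a_1-r_2\a_2)L_1(t)+(\a_2-r_1\a_1)L_2(t)$ is adapted, continuous and of locally bounded variation with $V(0)=0$ (continuity of the $L_i$ relies on the $X_i$ having no negative jumps).

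The second step is to invoke~\cite{kella_whitt} with parameter~$1$: the process
\begin{equation*}
M(t)=\bigl(\phi_1(\a_1)+\phi_2(\a_2)\bigr)\int_0^t e^{-Z(s)}\,\D s+e^{-Z(0)}-e^{-Z(t)}-\int_0^t e^{-Z(s)}\,\D V(s)
\end{equation*}
is a zero-mean martingale. The essential geometric input is the complementarity condition: $L_i$ is continuous and can only increase when $W_i=0$, so under $\D L_1(s)$ we may replace $Z(s)$ by $\a_2 W_2(s)$, and similarly under $\D L_2(s)$ by $\a_1 W_1(s)$. Splitting $\D V$ accordingly, this turns the last term of $M(t)$ into $(\a_1-r_2\a_2)\int_0^t e^{-\a_2 W_2(s)}\,\D L_1(s)+(\a_2-r_1\a_1)\int_0^t e^{-\a_1 W_1(s)}\,\D L_2(s)$.

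The final step is to take the stationary expectation $\e^*$ at $t=1$. Stationarity yields $\e^* e^{-Z(0)}=\e^* e^{-Z(1)}=\e e^{-\a_1 W_1-\a_2 W_2}$, so these two boundary terms cancel; Fubini combined with stationarity gives $\e^*\int_0^1 e^{-Z(s)}\,\D s=\e e^{-\a_1 W_1-\a_2 W_2}$; and the remaining two integrals are, by definition~\eqref{eq:F}, exactly $F_1(\a_2)$ and $F_2(\a_1)$. Rearranging $\e^* M(1)=0$ then yields~\eqref{eq:main}.

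The main technical obstacle is integrability, namely $\e^* L_i(1)<\infty$, needed both to make $M$ a genuine (not merely local) martingale and to move the expectation inside the $\D L_i$ integrals. I would handle this by the standard localization at stopping times $T_n:=\inf\{t\geq 0:L_1(t)+L_2(t)\geq n\}$, applying the martingale identity at $t\wedge T_n$ and passing to $n\to\infty$; alternatively, the two balance identities $\e^* L_i(1)-r_i\e^* L_{3-i}(1)=d_i$ obtained from $\e^*(W_i(1)-W_i(0))=0$ in~\eqref{eq:model}, together with $r_1r_2<1$ and the stability condition~\eqref{eq:stability}, give the finiteness and indeed pin down the constants $\e^* L_i(1)$.
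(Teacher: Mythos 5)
Your proposal is correct and follows essentially the same route as the paper: the Kella--Whitt martingale applied to the projection $Z(t)=\a_1W_1(t)+\a_2W_2(t)$ with $V(t)=(\a_1-r_2\a_2)L_1(t)+(\a_2-r_1\a_1)L_2(t)$, the complementarity of $L_i$ and $W_i$ turning the $\D V$ integral into $F_1(\a_2)$ and $F_2(\a_1)$, and $\e^*M(1)=0$ in stationarity. The one point handled differently is the integrability $\e^* L_i(1)<\infty$: rather than localizing or invoking the balance identities (the latter is slightly circular, since justifying $\e^*(W_i(1)-W_i(0))=0$ already presupposes some integrability), the paper settles it directly via the pathwise bound $(1-r_1r_2)L_1(t)\le -\underline{X}_1(t)-r_1\underline{X}_2(t)$ combined with $\e|\underline{X}_i(t)|<\infty$, which makes the martingale genuine and the expectations finite without any limiting argument.
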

\begin{proof}
 Fix $\a_1,\a_2>0$ and define a spectrally-positive \levy process $X(t)=\a_1 X_1(t)+\a_2 X_2(t)$ 
and a process of bounded variation $Y(t)=(\a_1-r_2\a_2)L_1(t)+(\a_2-r_1\a_1)L_2(t)$, so that $Z(t):=\a_1 W_1(t)+\a_2 W_2(t)=X(t)+Y(t)$.
Let us first show that $\e L_i(t)<\infty$ and hence the expected variation of $Y(t)$ on finite intervals is finite. Start by noting that 
\begin{align*}
&L_1(t)\leq -\underline X_1(t)+r_1 L_2(t),& L_2(t)\leq -\underline X_2(t)+r_2 L_1(t),
\end{align*}
see also~\cite{kella_reflecting}. 
Hence $(1-r_1r_2)L_1(t)\leq -\underline X_1(t)-r_1 \underline X_2(t)$, but it is known that $\e |\underline X_i(t)|<\infty$. 
In conclusion,
\[M_t=(\phi_1(\a_1)+\phi_2(\a_2))\int_0^t e^{-Z(s)}\D s+e^{-Z(0)}-e^{-Z(t)}-\int_0^t e^{- Z(s)}\D Y(s)\]
is a martingale for any initial distribution, see~\cite[Thm.~2]{kella_whitt}.
Considering $\e^* M_1=0$ we obtain
\begin{align*}
 &(\phi_1(\a_1)+\phi_2(\a_2))\e e^{-\a_1 W_1-\a_2 W_2}=\\
&(\a_1-r_2\a_2)\e^*\int_0^1 e^{-Z(s)}\D L_1(s)+(\a_2-r_1\a_1)\e^*\int_0^1 e^{-Z(s)}\D L_2(s).
\end{align*}
Use the properties of $L_i(t)$ to conclude.
\end{proof}

\begin{remark}
 The functional equation~\eqref{eq:main} can be used to derive the following identity for the means:
\begin{equation}\label{eq:means}
 r_2(d_1+r_1 d_2)\e W_1+r_1(d_2+r_2 d_1)\e W_2=\frac{1}{2}(r_2\phi''_1(0)+r_1\phi''_2(0)).
\end{equation}
One way is to put $\a_1=r_2\a,\a_2=\a$, express $F_2(r_2\a)$, differentiate it at $\a=0$, and then to do the same for $\a_1=\a,\a_2=0$. Then the above identity follows by 
expressing $F_2'(0)$ from these equations. 
Note also that if $d_1,d_2>0$ then the right side of~\eqref{eq:means} is $r_2 d_1\e V_1+r_1 d_2\e V_2$, 
where $V_i$ refers to the stationary workload in queue $i$ considered alone.
It may be an interesting exercises to prove this relation
probabilistically from the first principles at least for Poisson
inputs.
\end{remark}

\subsection{Wiener-Hopf factorization}\label{sec:proof2}
Start by noting that the Laplace exponent $\phi_i(\a)$ of a spectrally-positive \levy process is analytic in the right half of the complex plane and is continuous on the imaginary axis,
which can be shown from~\eqref{eq:phi}. Hence the same is true for $\Phi_i(\a)$, which is minus a Laplace exponent according to~\eqref{eq:Phi}.
This equation also implies that $\Re(\Phi_i(\a))>0$ if and only if $\Re(\a)>0$, which is seen by sending $x$ to $\infty$, and that $\Phi_i(\a)\neq 0$ for $\a\neq 0$.
Hence the identity $\phi_i(\Phi_i(\a))=\a$ extends from $\a>0$ to all $\a$ with $\Re(\a)\geq 0$. 
Note also that the functions $F_i(\a)$ are analytic on $\{\a:\Re(\a)>0\}$ and continuous on the imaginary axis, which follows from their definition and $\e^* L_i(1)<\infty$.
In conclusion, Equation~\eqref{eq:main} holds for all $\a_1,\a_2$ with $\Re(\a_1),\Re(\a_2)\geq 0$. 

We use a similar uniformization approach
as in ~\cite{cohen_boxma} for the special
case of compound Poisson input: we consider~\eqref{eq:main} for $\a_1=\Phi_1(w)$ and $\a_2=\Phi_2(-w)$, where $w\in i\R$ lies on the imaginary axis, and obtain
\[(\Phi_1(w)-r_2\Phi_2(-w))F_1(\Phi_2(-w))+(\Phi_2(-w)-r_1\Phi_1(w))F_2(\Phi_1(w))=0.\]
Assuming $w\neq 0$ we multiply this equation by $\frac{w}{\Phi_1(w)\Phi_2(-w)}$ to get 
\begin{equation}\label{eq:tobe_factorized}
 (-\frac{w}{\Phi_2(-w)}+r_2\frac{w}{\Phi_1(w)})F_1(\Phi_2(-w))=(\frac{w}{\Phi_1(w)}-r_1\frac{w}{\Phi_2(-w)})F_2(\Phi_1(w)),
\end{equation}
which immediately translates into
\[(p_L-\phi_L(w))F_1(\Phi_2(-w))=(p_R-\phi_R(w))F_2(\Phi_1(w))\]
according to~\eqref{eq:phi_phi}.
Finally, from the Wiener-Hopf factorization~\eqref{eq:WH} we have
\begin{equation}\label{eq:imaginary}
 p_L\frac{\Psi_R^-(w)}{\Psi_L^-(w)}F_1(\Phi_2(-w))=p_R\frac{\Psi_L^+(w)}{\Psi_R^+(w)}F_2(\Phi_1(w)),
\end{equation}
which also holds for $w=0$ by continuity.

The left side of~\eqref{eq:imaginary} is analytic in the left half-plane and the right side is analytic in the right-half plane, and both are continuous and coincide on the boundary.
So one is an analytic continuation of the other, see~\cite[Thm IX.1.1]{lang_complex_analysis}.
Assume for a moment that the so-obtained entire function is bounded. Then by Liouville's theorem~\cite[Thm III.7.5]{lang_complex_analysis} it is a constant, call it $C$.

Let us determine the constant $C$, by plugging $w=0$ in~\eqref{eq:imaginary}. According to the stability condition~\eqref{eq:stability} at least one of $d_i$ is positive. 
If $d_1>0$ then $\Phi_1(0)=0$ and hence $C=p_R\e^* L_2(1)$, whereas $C=p_L\e^* L_1(1)$ if $d_2>0$.
Note also that for a stationary system
\[0=-d_1-r_1 \e^* L_2(1)+\e^* L_1(1)\text{ and }0=-d_2-r_2 \e^* L_1(1)+\e^* L_2(1),\]
which yields
\begin{align*}
 &\e^* L_1(1)=\frac{d_1+r_1d_2}{1-r_1r_2}, &\e^* L_2(1)=\frac{d_2+r_2d_1}{1-r_1r_2}
\end{align*}
and provides the expression for $C$. Furthermore,
\begin{align*}
& F_1(\Phi_2(-w))=\frac{p_R^0}{1-r_1r_2}\frac{\Psi_L^-(w)}{\Psi_R^-(w)}, &\Re(w)\leq 0,\\ &F_2(\Phi_1(w))=\frac{p_L^0}{1-r_1r_2}\frac{\Psi_R^+(w)}{\Psi_L^+(w)}, &\Re(w)\geq 0,
\end{align*}
where $p_L^0,p_R^0$ are given in~\eqref{eq:p0}. This can be checked by considering three scenarios $d_i\geq 0, d_2<0, d_1<0$ separately.

Considering the first equation we let $w=-\phi_2(\a)$ for $\a\geq \Phi_2(0)$, so that $\Phi_2(-w)=\a$; for the second we let $w=\phi_1(\a)$ with $\a\geq \Phi_1(0)$.
This immediately yields the functions $F_i(\a)$:
\begin{align*}
&F_1(\a)=\frac{p_R^0}{1-r_1r_2}\frac{\Psi_L^-(-\phi_2(\a))}{\Psi_R^-(-\phi_2(\a))}, & F_2(\a)=\frac{p_L^0}{1-r_1r_2}\frac{\Psi_R^+(\phi_1(\a))}{\Psi_L^+(\phi_1(\a))}.
\end{align*}
This together with~\eqref{eq:main} completes the proof of Theorem~\ref{thm:main} under the assumption 
that the entire function defined by~\eqref{eq:imaginary} is a constant.

\subsection{Bounds on the entire function}\label{sec:proof3}
In this section we show that the entire function defined by~\eqref{eq:imaginary} is a constant.
Consider~\eqref{eq:F} and observe that $F_1(\a)$ and $F_2(\a)$ are bounded for $\Re(\a)\geq 0$.
Let $\tilde X_L(t)$ and $\tilde X_R(t)$ be the processes $X_L(t)$ and $X_R(t)$, see~\eqref{eq:X_LR}, in the model
defined by \eqref{eq:model} but with interchanged indices. 
That is, we consider the same system with reversed indexing. Observe that $X_R(t)=-\tilde X_L(t),X_L(t)=-\tilde X_R(t),p_R=\tilde p_L, p_L=\tilde p_R$ and hence
\begin{align}\label{eq:interchanged}
&\Psi_R^-(-w)=\tilde \Psi_L^+(w), &\Psi_L^-(-w)=\tilde \Psi_R^+(w).  
\end{align}
Therefore, it is sufficient to analyze $\Psi_L^+(w)/\Psi_R^+(w),\Re(w)\geq 0$.

Recall the following Spitzer-type identity for a \levy process $X_L(t)$:
\begin{equation}\label{eq:Psi}
 \Psi_L^+(w)=\exp\left(-\int_0^\infty\int_0^\infty (e^{-p_Lt}-e^{-p_Lt-wx})\frac{1}{t}\p(X_L(t)\in \D x)\D t\right),
\end{equation} 
see Thm.\ 6.16 and comments on p.~168 in~\cite{kypr}.
Observe also that for $\Re(w)\geq 0$ we have
\begin{align*}
  &|\int_1^\infty\int_0^\infty (e^{-p_Lt}-e^{-p_Lt-wx})\frac{1}{t}\p(X_L(t)\in \D x)\D t|\leq \int_1^\infty 2e^{-p_Lt}\frac{1}{t}\D t<\infty, \\
 &|\int_0^1\int_0^\infty e^{-w x}(1-e^{-p_Lt})\frac{1}{t}\p(X_L(t)\in \D x)\D t| \leq p_L.
\end{align*}
Hence $\Psi_L^+(w)/\Psi_R^+(w)$ is bounded by $C_1|e^{A(w)}|$, where 
\begin{align}\label{eq:A}
&A(w):=\\ \int_0^1 \frac{1}{t}&\left(\int_0^\infty (1-e^{-wx})\p(X_R(t)\in \D x)-\int_0^\infty (1-e^{-wx})\p(X_L(t)\in \D x)\right)\D t. \nonumber
\end{align}

\subsubsection{Bounded variation case}
If both $X_1(t)$ and $X_2(t)$ have bounded variation then $X_L(t)$ and $X_R(t)$ are CPPs and hence 
\begin{align*}
&\int_0^1 \frac{1}{t}\p(X_L(t)>0) \D t<\infty, &\int_0^1 \frac{1}{t}\p(X_R(t)>0) \D t<\infty.
\end{align*}
This immediately shows that 
$A(w)$ and hence $\Psi_L^+(w)/\Psi_R^+(w)$ are bounded.

\subsubsection{The general case}
The proof in the general case is based on the following proposition.
\begin{prop}\label{prop:cond}
There exists a constant $C$, such that $|\Psi_L^+(w)/\Psi_R^+(w)|$, $\Re(w)\geq 0$ is bounded by $C|w|^4$ for large enough $|w|$.
In addition, \linebreak
$\Psi_L^+(w)/\Psi_R^+(w)=o(w)$ as $w\rightarrow\infty$ along the real numbers.
\end{prop}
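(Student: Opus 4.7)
The plan is to deduce both claims from the Spitzer-type bound $|\Psi_L^+(w)/\Psi_R^+(w)|\leq C_1|e^{A(w)}|$ established via~\eqref{eq:A}. It suffices, for the polynomial bound, to show $\Re A(w)\leq 4\log|w|+O(1)$ for large $|w|$ in the closed right half-plane; and for the second claim to sharpen this to $o(\log w)$ along the positive reals. The bounded-variation case has already been disposed of just above, so I may assume that at least one of $X_1, X_2$ has unbounded variation.

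The first step is to exploit the common driving structure: both $X_L(t)=Y_1(r_2 t)-Y_2(t)$ and $X_R(t)=Y_1(t)-Y_2(r_1 t)$ are built from the same pair of subordinators. Coupling the two processes via a common pair $(Y_1,Y_2)$ renders the signed measure $\mu^R_t-\mu^L_t$ (the restrictions of the laws of $X_R(t), X_L(t)$ to $(0,\infty)$) amenable to a short-time expansion in $t$. The leading positive-jump contributions differ by $(1-r_2)\nu_1^Y(\D x)$, and their contribution to $A(w)$ evaluates to $-(1-r_2)\phi_1^Y(w)=O(|w|/|\Phi_1(w)|)$; alone this would yield only a stretched-exponential bound, so the argument must also exploit the cancellation produced by the shared Brownian and negative-jump structure.

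The second step is a cutoff split of the inner $x$-integral at $|w|^{-1}$: for $x\leq|w|^{-1}$ I would use $|1-e^{-wx}|\leq|w|\,x$ together with the first-moment estimate $\e[(X_R(t)-X_L(t))^+]=O(t)$ supplied by the coupling; for $x>|w|^{-1}$ I would use $|1-e^{-wx}|\leq 2$ together with the tail estimate $\p(X_{L/R}(t)>|w|^{-1})\leq c\,t\,\nu^+(|w|^{-1},\infty)+O(t^2)$ valid for small $t$. After the $\D t/t$ integration over $(0,1]$ each contribution is $O(\log|w|)$, yielding the desired polynomial bound. The interchanged-indices identity~\eqref{eq:interchanged} transfers the same estimate to $\Psi_L^-/\Psi_R^-$ on the left half-plane, which together with the $F_i$-boundedness established earlier shows that the entire function defined via~\eqref{eq:imaginary} is of polynomial growth.

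For the $o(w)$ refinement along the positive real axis the same split gives more: for real $w>0$ the factor $1-e^{-wx}\in[0,1]$ is monotone in $x$, so a dominated-convergence argument applied to the first-moment and tail pieces introduces an $o(1)$ factor as $w\to\infty$, converting $O(\log w)$ into $o(\log w)$, which certainly dominates $o(w)$. The main obstacle is ensuring the uniformity in $t$ of the cancellation in step two: when $X_1$ or $X_2$ has a nonzero Gaussian component, the transition densities of $X_L, X_R$ behave like $t^{-1/2}$ near the origin, and the moment and tail contributions must be carefully paired so that neither produces a logarithmic divergence of the $\D t/t$ integral at $t=0^+$.
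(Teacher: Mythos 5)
There is a genuine gap in both halves of your estimate of $A(w)$, and it occurs precisely in the case that matters (the bounded-variation case having already been dispatched, at least one $Y_i$ is an infinite-activity subordinator). For the piece $x>|w|^{-1}$ you invoke $\p(X_R(t)>|w|^{-1})\leq c\,t\,\nu^+(|w|^{-1},\infty)+O(t^2)$ and claim the $\D t/t$ integral is $O(\log|w|)$; but that integral equals $2c\,\nu^+(|w|^{-1},\infty)+O(1)$, and for an infinite-activity subordinator $\nu^+(|w|^{-1},\infty)$ can grow like a power of $|w|$ (e.g.\ like $|w|^{\a}$ in the stable case), which after exponentiation destroys any polynomial bound. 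For the piece $x\leq|w|^{-1}$ the bound $|1-e^{-wx}|\le|w|x$ combined with $\e X_R^+(t)=O(t)$ gives an integrand of order $|w|$ uniformly in $t$, hence a contribution of order $|w|$, not $O(\log|w|)$. The cure in both cases is the one the paper uses: the cutoff must be taken in $t$ rather than in $x$ (the substitution $t\mapsto t/|w|$, as in the proof of Lemma~\ref{lem:Gamma}), using $|1-e^{-wx}|\le 2|w|x$ together with $\e X_R^+(t)\le t\,\e Y_1(1)$ for $t\le 1/|w|$, and the trivial bound $|1-e^{-wx}|\le 2$ for $t\in(1/|w|,1]$; each of the two terms of $A(w)$ then contributes at most $2\e Y_1(1)+2\ln|w|$ (after truncating large jumps if $\e Y_1(1)=\infty$, a reduction you also omit), giving $|A(w)|\le C+4\ln|w|$ and hence the $C|w|^4$ bound.

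Your first step also rests on two misconceptions. The processes $X_L$ and $X_R$ are differences of the pure-jump subordinators $Y_1,Y_2$, so they carry no Gaussian component even when $X_1,X_2$ do; there is no $t^{-1/2}$ density issue, and no cancellation between the $X_R$- and $X_L$-terms of $A(w)$ is needed --- the paper bounds the two terms separately. For the $o(w)$ statement along the reals the right observation is not that $A(w)=o(\log w)$ (which your dominated-convergence sketch would require and which need not hold), but that for real $w$ the $X_L$-term of $A(w)$ is nonnegative and may simply be dropped, while $X_R(t)\le Y_1(t)$ gives $A(w)\le\int_0^1 t^{-1}(1-e^{\phi_1^Y(w)t})\,\D t\le 1+\ln(|\phi_1^Y(w)|\vee 1)$ by Lemma~\ref{lem:Gamma}; exponentiating yields $\Psi_L^+(w)/\Psi_R^+(w)=O(|\phi_1^Y(w)|)=O(w/\Phi_1(w))=o(w)$. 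Your remark that the contribution $-(1-r_2)\phi_1^Y(w)$ ``would yield only a stretched-exponential bound'' is exactly the trap: the $\D t/t$ integral of $1-e^{\phi_1^Y(w)t}$ is logarithmic in $|\phi_1^Y(w)|$, not linear, because the integrand saturates at $1$; this is the whole point of Lemma~\ref{lem:Gamma}.
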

Proposition~\ref{prop:cond} together with~\eqref{eq:interchanged} implies that the entire function defined by~\eqref{eq:imaginary} is bounded by a polynomial 
and hence it is a polynomial itself, see~\cite[Cor.\ III.7.4]{lang_complex_analysis}. Taking limit along the reals shows that this polynomial is just a constant.
The proof of Proposition~\ref{prop:cond} relies on the following technical lemma.
\begin{lemma}\label{lem:Gamma} For $\Re(z)\geq 0$ it holds that 
\[\int_0^1\frac{1}{t}|1-e^{-zt}|\D t\leq 2+2\ln(|z|\vee 1).\]
 For positive $z$ the bound can be replaced by $1+\ln(|z|\vee 1)$. 
\end{lemma}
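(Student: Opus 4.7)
The plan is to exploit two elementary pointwise bounds on $|1-e^{-zt}|$ for $\Re(z)\geq 0$ and use each on the range where it is sharp. First I would record the bound $|1-e^{-w}|\leq |w|$ for $\Re(w)\geq 0$, which follows by writing $1-e^{-w}=w\int_0^1 e^{-sw}\,ds$ and noting that $|e^{-sw}|=e^{-s\Re(w)}\leq 1$ for $s\in[0,1]$. Combined with the trivial bound $|1-e^{-w}|\leq 1+e^{-\Re(w)}\leq 2$, this gives $|1-e^{-zt}|\leq |z|t$ and $|1-e^{-zt}|\leq 2$ throughout the integration range.

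Next I would split into cases according to whether $|z|\leq 1$ or $|z|>1$. If $|z|\leq 1$, then $\ln(|z|\vee 1)=0$ and applying the first bound gives
\[
\int_0^1 \frac{|1-e^{-zt}|}{t}\,dt \leq \int_0^1 |z|\,dt =|z|\leq 1\leq 2,
\]
which is within the claimed bound. If $|z|>1$, I would split the integral at $t_0:=1/|z|\in(0,1)$; on $(0,t_0)$ apply the linear bound to get
\[
\int_0^{1/|z|}\frac{|z|t}{t}\,dt=1,
\]
and on $(t_0,1)$ apply the bound $2$ to get
\[
\int_{1/|z|}^1\frac{2}{t}\,dt=2\ln|z|.
\]
Summing yields $1+2\ln|z|\leq 2+2\ln(|z|\vee 1)$, establishing the first claim.

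For positive $z$ the integrand is real and nonnegative, so $|1-e^{-zt}|=1-e^{-zt}\leq 1$ (rather than $2$). Repeating the above split with $1$ in place of $2$ on the outer piece gives $1+\ln z$ when $z>1$, and the bound $z\leq 1$ directly when $z\leq 1$, which is $1+\ln(|z|\vee 1)$ in both cases.

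There is no real obstacle here: the only point requiring a hair of care is the complex bound $|1-e^{-w}|\leq|w|$, which is immediate from the straight-line contour integral representation once one checks that $\Re(w)\geq 0$ makes the integrand bounded by $|w|$. Everything else is elementary splitting of the $dt/t$-integral at the natural scale $t\sim 1/|z|$.
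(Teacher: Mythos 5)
Your proof is correct and follows essentially the same route as the paper: split the $\D t/t$ integral at the scale $1/|z|$, use a linear bound $|1-e^{-w}|\le |w|$ near the origin and a constant bound away from it (the paper does the same after a cosmetic change of variables $t\mapsto t/|z|$, using the slightly cruder $|1-e^{-w}|\le 2|w|$ and hence arriving at $2+2\ln|z|$ rather than your $1+2\ln|z|$). Your handling of the positive-$z$ refinement is also correct and matches the intended argument.
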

\begin{proof}
For $|z|>1$ we write
\begin{align}\label{eq:lemma}
 &\int_0^1\frac{1}{t}|1-e^{-zt}|\D t=\int_0^{|z|}\frac{1}{t}|1-e^{-tz/{|z|}}|\D t\\
&\leq \int_0^1\frac{2|tz/|z||}{t}\D t+\int_1^{|z|}\frac{2}{t}\D t=2+2\ln |z|.\nonumber
\end{align}
The result is immediate for $|z|<1$. 
\end{proof}

\begin{proof}[Proof of Proposition~\ref{prop:cond}]
For positive $w$ the function $A(w)$ defined in~\eqref{eq:A} is bounded from above by
\[\int_0^1 \frac{1}{t}\e(1-e^{-w Y_1(t)};X_R(t)>0)\D t\leq\int_0^1 \frac{1}{t}(1-e^{\phi^Y_1(w)t})\D t, \]
because $X_R(t)\leq Y_1(t)$. Recall that $\phi^Y_1(w)\leq 0$, and use Lemma~\ref{lem:Gamma}, to bound $A(w)$ by $1+\ln(|\phi^Y_1(w)|\vee 1)$.
Finally, exponentiate and use~\eqref{eq:phi_y} to establish that $\Psi_L^+(w)/\Psi_R^+(w)=o(w)$. 

For $w$ with $|w|>1,\Re(w)\geq 0$ we mimic the steps in~\eqref{eq:lemma} to show that
\begin{align*}&\int_0^1 \frac{1}{t}\e\left|1-e^{-wX_R(t)};X_R(t)>0\right|\D t\\
 &\leq \int_0^1\frac{2|w|\e X^+_R(t/|w|)}{t}\D t+\int_1^{|w|}\frac{2}{t}\D t\leq 2\e Y_1(1)+2\ln|w|.
\end{align*}
A similar bound (with the constant $2r_2\e Y_1(1)$) can be obtained for the term involving $X_L(t)$.
Assuming that $\e Y_1(1)<\infty$ we have $|A(w)|\leq C+4\ln|w|$ for $|w|>1$ and some constant $C$, which yields the result.
If $\e Y_1(1)=\infty$ then we can easily reduce our problem to the one, where the large jumps of $Y_1(t)$ are removed and hence $\e Y_1(1)<\infty$.
\end{proof}

\begin{remark}
 Our proof does not imply that $\Psi_L^+(w)/\Psi_R^+(w)$ is bounded (unless both $X_i(t)$ have bounded variation), 
and hence it leaves the possibility that $F_2(w)\rightarrow 0$ as $w\rightarrow 0$,
which is equivalent to \[\int_0^1 \1{W_1(t)=0}\D L_2(t)=0\] $\p^*$-a.s.\ (and similarly for the reversed indexing).
\end{remark}

\section{Special cases}
\label{sec:special}

\subsection{Deterministic drift in one queue}\label{sec:detdrift}
Suppose $X_2(t)=-d_2 t$ with \mbox{$d_2\geq 0$}, so that $\phi_2(\a)=d_2\a$. Then $X_2(t)-r_2L_1(t)$ is a non-increasing process, and so $W_2(t)= 0$ implying $L_2(t)=r_2L_1(t)+d_2t$, and therefore
\[W_1(t)=X_1(t)-r_1 d_2 t +(1-r_1r_2)L_1(t).\]
But then $W_1(t)$ is a one-dimensional reflection of $X_1(t)-r_1 d_2 t$, and so the generalized Pollaczek-Khinchine formula gives
\begin{equation}\label{eq:example}
 \e e^{-\a_1 W_1-\a_2 W_2}=(d_1+r_1d_2)\frac{\a_1}{\phi_1(\a_1)+r_1d_2\a_1}.
\end{equation}
Let us check if this formula coincides with the result of Theorem~\ref{thm:main}.
Note that $Y_2(t)=0$ and hence $\Psi^-_L(\a)=\Psi^-_R(\a)=1$.
Also $\Psi^+_L(\a)=\frac{p_L}{p_L-r_2\phi_1^Y(\a)}$ and $\Psi^+_R(\a)=\frac{p_R}{p_R-\phi_1^Y(\a)}$.
Therefore, we get
\begin{align*}&\e e^{-\a_1 W_1-\a_2 W_2}=\frac{1}{(1-r_1r_2)(\phi_1(\a_1)+d_2\a_2)}\\
 &\times\left(p_R^0(\a_1-r_2\a_2)+p_L^0(\a_2-r_1\a_1)\frac{p_R(p_L-r_2(d_1^+-\phi_1(\a_1)/\a_1))}{(p_R-(d_1^+-\phi_1(\a_1)/\a_1))p_L}\right),
\end{align*}
which indeed reduces to~\eqref{eq:example}. Let us only check the case when $d_1<0$.
We have $d_1^+=0,p_R=r_1d_2,p_L=d_2,p_R^0=r_1d_2+d_1,p_L^0=d_2+d_1/r_1=p_R^0/r_1$, and so the transform reduces to
\begin{align*}\frac{r_1d_2+d_1}{(1-r_1r_2)(\phi_1(\a_1)+d_2\a_2)}\left(\a_1-r_2\a_2+(\a_2-r_1\a_1)\frac{(d_2+r_2\phi_1(\a_1)/\a_1))}{(r_1d_2+\phi_1(\a_1)/\a_1))}\right),\
\end{align*}
which immediately yields~\eqref{eq:example}.
\subsection{Queues fed by Brownian motion}
Suppose $\phi_i(\a)=d_i\a+\a^2/2$, i.e.\ the driving processes are standard Brownian motions with drifts. Then $\Phi_i(\a)=-d_i+\sqrt{d_i^2+2\a}$ and so
\[\phi^Y_i(\a)=d_i^++\frac{\a}{d_i-\sqrt{d_i^2+2\a}}=\frac{|d_i|-\sqrt{d_i^2+2\a}}{2},\] which corresponds to an inverse Gaussian subordinator.
Hence the processes $X^L$ and $X^R$ can be seen as differences of two inverse Gaussian processes.
Their respective Laplace exponents are given by
\begin{align*}
&\phi_L(w)=\phi^Y_2(-w)+r_2\phi^Y_1(w)=\frac{1}{2}\left(|d_1|r_2+|d_2|-\sqrt{d_2^2-2w}-r_2\sqrt{d_1^2+2w}\right),\\
&\phi_R(w)=\frac{1}{2}\left(|d_1|+r_1|d_2|-r_1\sqrt{d_2^2-2w}-\sqrt{d_1^2+2w}\right).
 \end{align*}
The final step according to Theorem~\ref{thm:main} is to identify the Wiener-Hopf factors corresponding to these Laplace exponents.

\subsection{Compound Poisson input}
This subsection briefly examines relation between the general result given in Theorem~\ref{thm:main} and the result of~\cite{cohen_boxma} for CPP inputs.
Assume that customers arrive into queue $i$ with intensity $\lambda_i$ and bring iid amount of work distributed as $B_i$, and the server speed is $s_i$.
Suppressing the index $i$, the Laplace exponent of the driving process $X(t)$ is \[\phi(\a)=\a s-\lambda+\lambda \e e^{-\a B}.\]
Therefore,
\[\frac{\phi(\a)}{\a}=s-\lambda\e B\frac{1-\e e^{-\a B}}{\a \e B}=s-\rho\e e^{-\a R},\]
where $\rho=\lambda\e B$ and $R$ has the stationary residual life distribution associated to $B$.
This further leads to 
\[\frac{\a}{\Phi(\a)}=s-\rho\e e^{-\a \tau^-_{R}},\]
where $R$ is assumed to be independent of the driving process $X(t)$ and hence of its first passage time $\tau_x^-$. 
Note that $\tau^-_R$ has the interpretation of the length of the busy period in a queue driven by $X(t)$ and started with workload~$R$. 
For simplicity we assume that $s-\rho> 0$ and hence $\tau^-_R$ is a proper positive random variable, which we denote by $U$.

Consider 
\begin{equation}\label{eq:cb}
 -\frac{w}{\Phi_2(-w)}+r_2\frac{w}{\Phi_1(w)}=s_2-\rho_2\e e^{w U_2}+r_2s_1-r_2\rho_1\e ^{-w U_1}
\end{equation}
appearing in~\eqref{eq:tobe_factorized}. Note that this expression can be rewritten as $(s_2+r_2s_1)(1-z \e e^{-w \tilde U})$, where 
$z=(\rho_2+r_2\rho_1)/(s_2+r_2s_1)\in (0,1)$ and $\tilde U$ is a mixture of $U_1$ and $-U_2$. 
This allows to apply Wiener-Hopf factorization for the random walk induced by $\tilde U$ to decompose~\eqref{eq:cb} into a product of functions, 
which are analytic in different half planes, see~\cite{cohen_boxma} for details.

Finally, we mention that a similar technique can be used, when both $X_i(t)$ are \levy processes of bounded variation (a subordinator minus linear drift).
In this case $R$ can be interpreted as an asymptotic overshoot of the corresponding subordinator. This technique fails to generalize further.
In general one can use Wiener-Hopf factorization for \levy processes as is demonstrated in the present paper, which furthermore provides a uniform and neat solution, see Theorem~\ref{thm:main}.

\section*{Acknowledgments}

\bibliography{two_dim.bib}
\end{document}